\documentclass[reqno]{amsart}
\usepackage{amsthm,amssymb,amsmath}
\usepackage{mathrsfs, euscript}
\usepackage{hyperref}
\usepackage{tikz}
\usetikzlibrary[matrix, arrows, arrows.meta, calc, decorations.pathmorphing, backgrounds, positioning, fit, petri, cd, 3d, shadings, intersections, patterns.meta, shadows, knots, decorations.pathreplacing]
\usetikzlibrary{arrows.meta}
\newcommand{\C}{\mathbb{C}}  % the complex numbers
\newcommand{\R}{\mathbb{R}}  % the real numbers
\newcommand{\Z}{\mathbb{Z}}  % the integers 

\newcommand{\eu}[1]{\EuScript{#1}}

\newcommand{\msf}[1]{\mathsf{#1}}

\newcommand{\op}[1]{\operatorname{\msf{#1}}}
\newcommand{\euH}{\EuScript{H}}
\newcommand{\euK}{\EuScript{K}}

\newcommand{\trr}{\triangleright}

\swapnumbers
\newtheorem{theorem}{Theorem}
\newtheorem{lemma}[theorem]{Lemma}

\theoremstyle{definition}
\newtheorem{definition}[theorem]{Definition}
\newtheorem{example}[theorem]{Example}
\newtheorem{topic}[theorem]{}
\theoremstyle{remark}
\newtheorem{remark}[theorem]{Remark}
\def \fnf(#1, #2){ (.8 + (#2) * (#1 + 2) + (2* (#1)) * ( (#1)*(#1) + 1)^(-1))/2.8   }

\def \fns(#1){  1.5 - (sin(180 * (#1))/2.7) }

\begin{document}
\title{Fundamental racks of braid spaces of complex reflection groups}
\author{Tathagata Basak}
\thanks{Supported by Simons Foundation Collaboration Grant 637005.}
\address{Department of Mathematics\\Iowa State University, \\Ames, IA 50011}
\email{tathagat@iastate.edu}
\urladdr{http://orion.math.iastate.edu/tathagat}
\keywords{rack, quandle, braid group, complex reflection group}
\subjclass[2020]{%
Primary: 
57K12,  %Generalized knots (virtual knots, welded knots, quandles, etc.)
51F15;   %Reflection groups, reflection geometries
%, 57R18  %Topology and geometry of orbifolds
Secondary: 
57R18,  %Topology and geometry of orbifolds
55Q05.  %Homotopy groups, general; sets of homotopy classes
}

\date{April 15, 2026}

\begin{abstract}
Let $\Gamma$ be a complex reflection group acting on the complex affine or hyperbolic space
$X$ with the set of reflecting hyperplanes $\euH$.
We define an augmented rack $(G, \euK, p)$ associated to the orbifold fundamental group
$G := \pi_1^{\op{orb}}( \Gamma \backslash (X - \euH))$ which 
plays the role of the fundamental rack of a framed link complement
as defined by Fenn and Rourke.
This yields representations of the orbifold fundamental group 
$G$ on the cohomology of the associated rack space.

%The orbifold fundamental group $G$ acts on $\euK$ and on 
%the associated rack space and on the cohomology of
%of rack spaces and this yields representations of the orbifold fundamental group $G$. 
\end{abstract}
\maketitle
%
%***************************************************************************************************************************
%
%
%\medskip

\begin{topic}{\bf Introduction.} Let $\Gamma$ be a complex reflection group acting on 
complex affine or hyperbolic space
$X$ with set of mirrors (i.e. reflecting hyperplanes) $\eu{M}$. 
Assume that there exists an $n \geq 1$ such that for each mirror $a \in \eu{M}$, 
the reflections in $\Gamma$ 
fixing $a$ pointwise forms a cyclic group of order $n$ generated by a complex 
 reflection $R_a$ that acts on the normal bundle to $a$ as multiplication by $e^{2 \pi i/n}$.
Then $\eu{M}$
has the standard rack structure (see example \ref{example-quandle-from-reflection-group})
given by $a  \trr b = R_a b$ for $a, b \in \eu{M}$.
Let $\eu{H}$ be the union of the mirrors in $\eu{M}$.
The purpose of this note is to define an augmented rack $(G, \euK, p)$ 
associated to the orbifold fundamental group
$G := \pi_1^{\op{orb}}( \Gamma \backslash (X - \euH))$. 
%The set of mirrors $\euH$ has the standard rack structure, called Coxeter rack in case $\Gamma$ is a Coxeter group. 
The rack $\euK$ is analog of the fundamental rack of a framed link complement from \cite{FR}.
The rank $\eu{K}$ is to the orbifold fundamental group $G$ what the rack $ \eu{M}$
 is to the reflection group $\Gamma$.
So in the Coxeter type $A_n$, transitioning from $ \eu{M}$ to $\eu{K}$ is 
like transitioning from the permutation 
group $S_n$ to the braid group $B_n$. The orbifold fundamental group
$G$ acts on $\eu{K}$ and on the associated rack space and on the cohomology
of rack space \cite{EG, FRS1, FRS2}
 and this yields representations of $G$. 
\par
For the applications we have in mind, the notion of a fundamental rack from \cite{FR} needs to be modified/generalized because of the
following issues:
\begin{enumerate}
\item We need to generalize to the setting of an orbifold fundamental group.
\item  We need to deal with the fact that the hyperplane arrangement $\euH \subseteq X$
and the corresponding divisor $\Gamma \backslash \euH \subseteq \Gamma\backslash X $
are not smooth real codimension two submanifolds but have singularities.
\item Since the space $X$ and the arrangement 
$\eu{M}$ have more structure in our setup, we
do not need to specify a  ``framing" as a separate piece of data.
\end{enumerate}

Addressing the first issue  is fairly straightforward. 
The main technical tool to address the other two issues is the generalized definitions of a meridian 
$\mu_{c, H}$ around a mirror $H$ based at a point $c$ developed in \cite{AB1, AB2}.
Using this, we find that the definition of \cite{FR} does generalize in the set up described above.
\end{topic}
\begin{definition}
A {\it rack} is a set $\eu{A}$ with a binary operation $(a, b) \mapsto a \trr b$ such that 
\begin{enumerate}
\item For all $a \in \eu{A}$, the map $c \mapsto a \trr c$ is a bijection from $\eu{A}$ to $\eu{A}$.
%In other words for all $a, b \in \eu{A}$ there exists a unique $c \in \eu{A}$ such that $a \trr c = b$.
\item  For all $a, b, c \in \eu{A}$ one has $a \trr (b \trr c) = (a \trr b) \trr (a \trr c)$.  
\end{enumerate}
A {\it quandle} is a rack $(\eu{A}, \trr)$ such that $a \trr a = a$ for all $a \in \eu{A}$.
An {\it augmented rack} is a triple $(G, \eu{A}, p)$ where $G$ is a group, $\eu{A}$ is a $G$-set
 (group actions are always on the left unless otherwise stated
 and are denoted by $(g, a) \mapsto g a$ or $(g, a) \mapsto g \vert_{\eu{A}} a$), and 
 $p: \eu{A} \to G$ is a function such that $p(g a) = g p(a) g^{-1}$ for all $g \in G, a \in \eu{A}$.
 An {\it augmented quandle} is an augmented rack $(G, \eu{A}, p)$ such that 
 $p(a)a = a$ for all $a \in \eu{A}$.
\par
If $(G, \eu{A}, p)$ is an augmented rack (resp. augmented quandle), then 
$\eu{A}$ becomes a rack (resp. a quandle) with the binary operation defined by
$(a \trr b) = p(a) \vert_{\eu{A}} b$.
There is an obvious notion of morphisms of racks (resp. quandles) and augmented racks (resp. augmented quandles).
We remark that there are many equivalent ways of defining a rack and many different notations are used, 
not all of them consistent. Our notation is that of \cite{EG}. To match with notation of \cite{FR} let
$a^b = b \trr a$ (read this as ``$b$ acting on $a$").
%The definition in wikipedia writes $a \tll b$ instead of $a \trr b$.
\end{definition}

Here are a couple of basic examples.
\begin{example} Let $\eu{A}$ be a union of conjugacy classes in a group $G$. Let $G$ act on 
$\eu{A}$ by conjugation
 $g \vert_{\eu{A}} x = g x g^{-1}$.
 Let $p: \eu{A} \to G$ be the inclusion.
 Then $(G, \eu{A}, p)$ is an augmented quandle.
\end{example}
 \begin{example}
 \label{example-quandle-from-reflection-group}
Let $F = \R$ or $\C$. Let $V$ be an inner product space over $F$.
Fix a nontrivial root of unity $\zeta \in F$.
Given a hyperplane $H$ in $V$ the
 (complex) $\zeta$-reflection in $H$, 
 denoted $R_H^{\zeta}$ means an automorphism of $V$
that fixes $H$ pointwise and acts on its orthogonal complement as multiplication by $\zeta$.
 Let $\Gamma \subseteq \op{Aut}(V)$ be a discrete subgroup generated by a set of $\zeta$-reflections.
 If $R$ is a nontrivial reflection in $\Gamma$, then the hyperplane pointwise fixed by
 $R$ is called the {\it mirror} of $R$.
 Let $\eu{M}$ be the set of mirrors of $\zeta$-reflections in $\Gamma$.
Let $p: \eu{M} \to \Gamma$ be the map $p(H) = R_H^{\zeta}$.
 Then $(\Gamma, \eu{M}, p)$ is an augmented quandle. This is a special case of the previous example
 because $\eu{M}$ is in bijection with the set of $\zeta$-reflections
 $\lbrace R^{\zeta}_H \colon H \in \eu{M} \rbrace $ and this set
is an union of conjugacy classes in $\Gamma$ since
 $g R^{\zeta}_H g^{-1} = R^{\zeta}_{g H}$ for all $g \in \Gamma$.
\end{example}

%
%\section{An augmented rack from Hyperplane arrangements}
%
\begin{definition}
\label{def-paths-orthogonal-to-generic-points-of-mirrors}
We define the basic paths we are going to use to define the fundamental rack and 
recall the basic definitions of orbifold fundamental groups and meridians from \cite{AB2}.
Let $X$ be a either the complex affine space $\C^m$  or the complex hyperbolic 
space $\mathbb{B}^{m}_{\C}$ of dimension $m$. 
The discussion below generalizes to more general contexts but the examples we have in mind 
are in $X = \mathbb{B}^m_{\C}$ and the most interesting example lives in $\mathbb{B}^{13}_{\C}$.
Given $a, b $ in $X$, let $\overline{a b}$ 
(resp. $\overline{ab}^{\C}$) denote the real (resp. complex) geodesic segment joining $a$ and $b$ in $X$.
Let $\eu{M}$ be a locally finite collection of totally geodesic 
hypersurfaces in $X$. We'll call the elements of $\eu{M}$ {\it mirrors}.
Let $\eu{H} \subseteq X$ be the union of the mirrors. 
Given $H \in \eu{M}$ and $b \in X - \eu{H}$ there exists a unique point $p$ on $H$ closest to $b$ 
called the projection of $b$ on $H$.
We write $p = \op{pr}_H(b)$.
\par
We'll say a point $x \in \eu{H}$ is a {
\it generic point} if 
$x$ belongs to a single mirror. For $H \in \eu{M}$, let $H^{\circ}$ be the set of generic points of $H$.
Fix a basepoint $\tau \in X - \euH$. For each $t \in [0, 1)$, 
Let $\op{Path}_t(\tau, H^{\circ})$ be the set of paths $\alpha: [0,1] \to X$ such that 
\begin{enumerate}
\item $\alpha(0) = \tau$ and $\alpha[0,1) \subseteq X - \euH$,
\item $p = \alpha(1) \in H^{\circ}$, 
\item there exists $t < 1$ such that 
$p = \op{pr}_H(\alpha(t))$ and $\alpha \vert_{[t, 1]}$ is the real geodesic segment
$\overline{\alpha(t) \alpha(1)} $
\footnote{ in other words $\alpha \vert_{[t, 1]}$ is a real geodesic segment orthogonal to $H$ at $\alpha(1)$}.
It follows that $p = \op{pr}_H( \alpha(s))$ for each $s \in [t, 1]$.
\end{enumerate}
If $t$ satisfies (3), then we'll say $\alpha \vert_{[t, 1]}$ is {\it a final geodesic portion of $\alpha$}.
Note: $\op{Path}_t(\tau, H^{\circ}) \subseteq \op{Path}_{t'}(\tau, H^{\circ})$ if $t \leq t'$.
Let 
\begin{equation*}
\op{Path}(\tau, H^{\circ}) = \cup_{t \in [0, 1)} \op{Path}_t(\tau, H^{\circ}) \text{\; and \;}
\op{Path}(\tau, \eu{H}^{\circ}) = \cup_{H \in \eu{M}} \op{Path}(\tau, H^{\circ}).
\end{equation*}
If $\alpha \in \op{Path}(\tau, H^{\circ})$, 
then we say that $\alpha$ is a {\it path from $\tau$ to a generic point of $H$, orthogonal to $H$}.
Each path $\alpha \in \op{Path}(\tau, \eu{H}^{\circ})$ uniquely determines the mirror at
which it ends, so we have a well defined map
\begin{equation*}
e: \op{Path}(\tau, \eu{H}^{\circ}) \to \eu{M} \text{\; given by\;}  e(\alpha) = H \text{\; if \;} \alpha(1) \in H.
\end{equation*}
\par
Say that $\alpha, \beta \in \op{Path}(\tau, H^{\circ})$ are homotopic, written $\alpha \sim \beta$, if there exists a map $F:  [0,1]^2 \to X$
such that $F(0, \cdot) = \alpha$, $F(1,\cdot) = \beta$, and there exists $s \in [0, 1)$ such that
$F(t, \cdot) \in \op{Path}_s(\tau,H^{\circ})$ for all $t \in [0,1]$.
So $\sim$ defines an equivalence relation on each $\op{Path}(\tau, H^{\circ})$
and hence on their disjoint union $\op{Path}(\tau, \eu{H}^{\circ})$.
Write
\begin{equation*}
\Pi(\tau, H^{\circ}) = \op{Path}(\tau, H^{\circ})/\sim \text{\; and \;}
\eu{K} = \Pi( \tau, \euH^{\circ}) = \op{Path}(\tau, \eu{H}^{\circ})/\sim.
\end{equation*}
So an element of $\Pi(\tau, H^{\circ})$ is a homotopy class of paths from $\tau$ to generic points of $H$, 
orthogonal to $H$ and $\eu{K}  = \Pi( \tau, \euH^{\circ})$ is the disjoint union of these homotopy classes.
Each homotopy class determines the hyperplane at which it ends, so we have a well defined map
\begin{equation*}
e : \eu{K} \to \eu{M} \text{\; such that \;} e([\alpha]) = H \text{\; if \;} \alpha(1) \in H.
\end{equation*}
\par
Assume that for each $H \in \eu{M}$ we have a unique finite order automorphism $R_H \in\op{Aut}(X)$  
(the automorphism group of $X$) such that $R_H(\eu{M}) = \eu{M}$ and $R_H$
pointwise fixes
$H$, and acts as an anti-clockwise rotation of $2 \pi/n$ on the normal bundle to $H$,
where $n$ is the order of $R_H$. By a reflection with mirror $H$, we mean a non-trivial element of $\op{Aut}(X)$ 
of the form  $R_H^j$. 
Note that the uniqueness in the definition of $R_H$ implies  that $R_{s H} = s R_H s^{-1}$ for all $s \in \op{Aut}(X)$ 
and $H \in \euH$.
Assume that the reflections 
$\lbrace R_H \colon H \in \euH \rbrace$ generate a discrete subgroup of $\op{Aut}(X)$, denoted by $\Gamma$.
We call $\Gamma$ the reflection group of $(X, \eu{H})$. 
Let 
\begin{equation*}
G = \pi_1^{\op{orb}}(\Gamma \backslash (X - \eu{H}) , \tau)
\end{equation*}
be the orbifold fundamental group of $\Gamma \backslash (X - \eu{H})$ based at $\tau$.
The elements of $G$ are pairs $(\gamma, s)$
where $s \in \Gamma$ and $\gamma$ is a homotopy class of paths from $\tau$ to $s \tau$ in $X - \euH$
with multiplication defined by
\begin{equation*}
(\gamma, s) (\gamma_1, s_1) = (\gamma * s \gamma_1, s s_1).
\end{equation*}
Here $\gamma * s \gamma_1$ means $\gamma$ followed by $s \gamma_1$.
Note that $(\gamma, s)^{-1} = (s^{-1} \gamma^{op}, s^{-1})$ where $\gamma^{op}$ denotes the
opposite path of $\gamma$.
Observe that the group $G$ acts on $\eu{K}$ by 
\begin{equation*}
(\gamma, s) \alpha = \gamma * s(\alpha) \text{\; for } (\gamma, s) \in G, \text{\; and \;} \alpha \in \eu{K}.
\end{equation*}
\par
Finally we recall from \cite{AB1, AB2} the definition of a {\it meridian}
$\mu_{b, H}$ based at a point $b \in X - \eu{H}$ and going around a mirror $H$
(or rather a special case of the definition that is sufficient for our purpose).
Let $H$ be a mirror and let $b \in X - \eu{H}$ such that $p = \op{pr}_H(b)$ is a generic
point of $H$ and such that the real geodesic $\overline{b p}$ does not meet any mirror other than $H$.
Then  the meridian $\mu_{b, H}$ is (the homotopy class of) the path defined as follows:
\par
 Choose a point $d$ on the geodesic $\overline{b p}$ 
sufficiently close to $p$ such that $d$ is contained in in a ball centered at $p$ that meets
no mirror other than $H$. 
Let $\op{arc}(d, R_H(d))$
 be the semicircular arc centered at $p$  lying in  the complex geodesic $\overline{b p}^{\C}$ 
going from $d$ to $R_H(d)$. This arc makes a counterclockwise rotation of angle $2 \pi /n$ around $p$.
The meridian $\mu_{b, H}$ is concatenation of three paths:
\begin{equation*}
\mu_{b, H} = \overline{b d} *\op{arc}(d, R_H(d)) * R_H( \overline{d b}).
\end{equation*}
We'll call the second part the {\it semicircular part of $\mu_{b, H}$}.
%The name meridian is borrowed from knot theory.
Lemma 3.1 of \cite{AB2} shows that the homotopy class of the path $\mu_{b,H}$
does not depend on the choice of $d$, so  $(\mu_{b, H}, R_H)$ yields a well defined loop in 
 $ \pi_1^{\op{orb}}(\Gamma \backslash (X - \eu{H}), b)$ 
that ``goes around the hypersurface represented by the mirror $H$ once counterclockwise". 
The notation $\mu_{b, H}$ may sometimes denote this homotopy class (also called a meridian)
or any path in this homotopy class.
\end{definition}
\begin{definition}
\label{def-the-path-mu-alpha}
Now we come to the main definition. Given a mirror $H \in \euH$ and a path $\alpha \in \op{Path}(\tau, H^{\circ})$,
we define a (homotopy class of) path $\mu_{\alpha}$ in $X - \eu{H}$ starting at $\tau$ and ending at $R_H(\tau)$.
We write $p = \alpha(1)$. Recall that if $t$ is close to $1$, then $p$ is the projection of $\alpha(t)$ on $H$.
To define this path, choose a point $c = \alpha(t)$ on the path $\alpha$ that is close enough to $H$
such that:
\begin{enumerate}
\item $c$ is in a final geodesic portion of $\alpha$, i.e.
$\alpha \vert_{[t, 1]} = \overline{c p}$,
\item  $c$ (and hence $\alpha \vert_{[t, 1]}$) is contained in a ball $B$ centered at $p$ that meets 
no mirror other than $H$.
\end{enumerate}
%Note that the first condition can be satisfied because of the third condition satisfied by $\alpha$
%given in definition \ref{def-paths-orthogonal-to-generic-points-of-mirrors}.
Note that the second condition can be satisfied because $p$ is a generic point of $H$ and the collection
$\eu{H}$ is locally finite.
Let $\alpha_c$ denote the restriction of $\alpha$ to $[0, t]$. Define $\mu_{\alpha}$ as a concatenation of three paths:
\begin{equation}
\mu_{\alpha} = \alpha_{c} * \mu_{c,H} * R_H( \alpha_c^{op}).
\label{eq-definition-of-mu-alpha}
\end{equation}
Here $\mu_{c, H}$ represents any path in the class of the meridian $\mu_{c, H}$.
The homotopy class of  $\mu_{\alpha}$ is well defined and does not depend on the choice of the point $c$.
All the technical work for this  is done in \cite{AB1, AB2}
where this and much more is carefully proved in the context of meridians. We collect what we need in the lemma
below. We remark that the notion of meridian in \cite{AB1, AB2} is more general 
because of many possible complications\footnote{For instance in \cite{AB1, AB2} the base-point is allowed to be a whole contractible subset 
(called ``fat base-point"), the point $p$ towards which the meridian is ``originally aiming" 
is allowed to be a non-generic point of a mirror,
and finally the various paths involved may meet mirrors along the way in which case 
one has to carefully define detours around these mirrors.
%and then one has to check well-definedness up-to homotopy in this more general setting.
}
 but thankfully none of these complications arise in our set up. In particular in the definition of $\mu_{c, H}$  above,
 no ``semicircular detours around mirrors" is
 necessary because $\overline{c p}$ just follows the path $\alpha$, so $\overline{c p}$ does not
 meet any mirror before hitting $H$.
\end{definition}

%The definition of $\mu_{c, H}$ and it basic properties developed in \cite{AB1, AB2}is the main technical ingredient we need. 
\begin{lemma}
\label{l-homotopy-invariance}
Assume the setup in the definition \ref{def-the-path-mu-alpha}.
\begin{enumerate}
\item The homotopy class of the path $\mu_{\alpha}$ in $(X - \eu{H})$ rel endpoints
do not depend on the choice of the point $c$. So $(\mu_{\alpha} , R_H)$ is an well defined element of the orbifold fundamental
group $\pi_1(\Gamma \backslash(X - \eu{H}), \tau)$.
\item If $\alpha$ and $\beta$ in $\op{Path}(\tau,H^{\circ})$
are homotopic, i.e. determine the same element of $\eu{K}$, then $\mu_{\alpha}$ and $\mu_{\beta}$ are homotopic in 
$X - \eu{H}$.
\end{enumerate}
\end{lemma}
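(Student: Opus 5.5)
For part (1), I will compare two admissible choices $c = \alpha(t)$ and $c' = \alpha(t')$ with, say, $t < t'$. Both points lie on the final geodesic segment $\overline{\alpha(t)p}$, inside a ball $B$ about $p$ that meets no mirror other than $H$. Then $\alpha_{c'} = \alpha_c * \overline{c c'}$, so
\[
\mu_{\alpha}(c') = \alpha_c * \overline{cc'} * \mu_{c',H} * R_H(\overline{c'c}) * R_H(\alpha_c^{op}).
\]
It therefore suffices to show that $\overline{cc'} * \mu_{c',H} * R_H(\overline{c'c})$ is homotopic rel endpoints to $\mu_{c,H}$ in $X - \euH$.

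I will choose the auxiliary point $d$ defining both meridians to be the same point on $\overline{c'p}$, close to $p$. This is allowed by Lemma 3.1 of \cite{AB2}, which says the class of the meridian is independent of $d$. The point $d$ then also lies on $\overline{cp}$, and the complex geodesics $\overline{cp}^{\C}$ and $\overline{c'p}^{\C}$ coincide, so the semicircular arcs are literally the same. Moreover $\overline{cc'} * \overline{c'd} = \overline{cd}$ up to reparametrization, and likewise after applying $R_H$. The two paths therefore agree after reparametrization, which proves (1). The hypotheses of the meridian definition hold because $\overline{cp}$ is a segment of $\alpha$ and so meets no mirror before $p$.

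Since $R_H\tau$ is the image of $\tau$ under $R_H \in \Gamma$, the pair $(\mu_\alpha, R_H)$ is then a well-defined element of $G$.

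For part (2), let $F$ be a homotopy with $F(u,\cdot) \in \op{Path}_s(\tau, H^\circ)$ for all $u$. Write $\alpha_u = F(u,\cdot)$ and $p_u = \alpha_u(1)$. For each $u$, condition (3) of the definition gives that $\alpha_u|_{[s,1]}$ is the orthogonal geodesic to $p_u \in H^\circ$.
\begin{itemize}
\item By compactness of $[0,1]$ and continuity of $F$, together with local finiteness of $\eu{M}$, I will choose a single $t \in [s,1)$ and a partition $0 = u_0 < \dots < u_k = 1$. The requirement is that for $u \in [u_{i-1}, u_i]$, each point $c_u = \alpha_u(t)$ is an admissible choice for $\alpha_u$: it lies in a ball around $p_u$ meeting only $H$, and these balls can be taken to contain a common ball around $p_{u_{i-1}}$.
\item With this fixed $t$, the family $u \mapsto \alpha_u|_{[0,t]}$ is a continuous family of paths in $X - \euH$ from $\tau$ to $c_u$.
\item The meridians $\mu_{c_u, H}$ can be constructed with $d_u = \alpha_u(t'')$ for a fixed $t''$ close to $1$. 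They then vary continuously in $u$ inside $X - \euH$, because the semicircle has radius $|d_u - p_u| > 0$ and lies in a ball meeting only $H$, avoiding $H$ itself.
\end{itemize}
Concatenating these continuous families gives a continuous family of paths from $\tau$ to $R_H\tau$ in $X - \euH$, that is, a homotopy from $\mu_\alpha$ to $\mu_\beta$ rel endpoints. Part (1) ensures that the specific choices of $c$ and $d$ do not matter.

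The main obstacle is the uniformity in the first step of part (2): finding one $t$ (and $t''$) that works simultaneously for all $u$. The mirror-free radius around $p_u$ is positive and lower semicontinuous in $u$, because $p_u$ varies continuously in $H^\circ$ and $\eu{M}$ is locally finite. It therefore has a positive minimum on $[0,1]$. Since $|\alpha_u(t) - p_u|$ is uniformly continuous and tends to $0$ as $t \to 1$, a uniform $t$ exists. If piecewise arguments are needed, part (1) glues the pieces.
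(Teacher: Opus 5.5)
Your proposal is correct and follows essentially the same route as the paper. Part (1) reduces to the independence of the meridian from the choice of $d$. Part (2) uses compactness of $\lbrace p_u \rbrace \subseteq H^{\circ}$ to get a uniform mirror-free radius, so that representatives of $\mu_{\alpha_u}$ can be built continuously in $u$. The only cosmetic difference is that the paper puts $d_u$ at a fixed distance $\epsilon$ from $p_u$ (on the boundary of a tube $T$) while you use a fixed parameter $t''$, and your partition of $[0,1]$ becomes unnecessary once the uniform $t$ is found.
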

\begin{proof}
(1) Let $c, c'$ be two choices of $c$. Without loss assume $c'$ is closer to $p$ than $c$. Then the two paths in questions are
$\alpha_{c} * \mu_{c,H} * R_H( \alpha_c^{op})$
and 
$\alpha_{c'} * \mu_{c', H} * R_H( \alpha_{c'}^{op})$.
Since both $c$ and $c'$ are in a final geodesic portion of $\alpha$,
the only possible difference between the two paths is that 
 the semicircular portion of the meridians may be taken starting at different points $d, d'$.
So part (1) follows from well-definedness of the homotopy class of the meridian (see Lemma 3.1 of \cite{AB2}).
\par
(2) This is similar to the proofs of lemmas 3.1 and 3.7 in \cite{AB2}.
For completeness sake, we indicate an argument.
Figure \ref{fig1} shows the various paths and points involved in the argument
(compare with figures 3.2 or 3.4 in \cite{AB2}).
Let $\lbrace \alpha_t \colon t \in [0, 1] \rbrace$ be a homotopy from
$\alpha = \alpha_0$ to $\beta = \alpha_1$, in
particular, each $\alpha_t \in \op{Path}_s(\tau, H)$ for some $s < 1$. 
Write $c_t = \alpha_t(s)$ and $p_t = \alpha_t(1)$.
Then $\op{pr}_H( c_t ) = p_t$  and $\alpha_t \vert_{[s, 1]}$ is the
geodesic segment $\overline{c_t p_t}$ 
for all $t \in [0, 1]$.  
Note that $\delta_0 = \min \lbrace d( c_t, p_t) \colon t \in [0, 1] \rbrace > 0$ and
 that $E = \lbrace p_t \colon t \in [0, 1] \rbrace$ is a compact subset of $H^{\circ}$.
So we can choose $\epsilon > 0$ such that
$\epsilon < \delta_0$ and such that 
no mirror other than $H$ comes within distance $\epsilon$ of $E$.
So each $\alpha_t$ has a final geodesic portion of length at least $\epsilon$.
Now consider the
tube $T$ around $E$ obtained by taking the union of the radius $\epsilon$ discs in the complex geodesics 
$\overline{c_t p_t}^{\C}$ for all $t \in [0, 1]$.
Note that each $c_t$ lies outside this tube by choice of $\epsilon$.
Let $d_t$ be the point on the boundary $\partial T$ where the geodesic $\overline{c_t p_t}$ enters $T$.
Then $\mu_{\alpha_t}$ has a representative $\mu_{\alpha_t}^*$ that follows $(\alpha_t)_{d_t}$,  then
takes its semicircular detour along $\partial T$ going from $d_t$ to $d'_t = R_H(d_t)$ 
lying in the complex geodesic $\overline{c_t p_t}^{\C}$
and then follows
$R_H( (\alpha_t)_{d_t}^{op})$. As $t$ varies from $0$ to $1$, these $\mu_{\alpha_t}^*$'s define a
homotopy from $\mu_{\alpha_0}^*$ to $\mu_{\alpha_1}^*$.
By part (1), the class of $\mu_{\alpha_0}^*$ is $\mu_{\alpha}$ and
the class of $\mu_{\alpha_1}^*$ is $\mu_{\beta}$.
 \end{proof}
 \begin{figure}
 \centerline{
\begin{tikzpicture}[scale = 1.2]
\foreach \t in {0, .05,...,1.25}{ 
\draw[line width = 1.8pt, gray!40!white, opacity = .3]  plot[domain=0 : 360, samples=20, variable = \u] ( {0.5 * cos(\u)}, {\fnf(1, \t) + .5 * \t }, {0.5 * sin(\u)} );  
}
\foreach \t in {0, .1,...,1.2}{ 
\draw[line width = .5pt, gray!80!white, opacity = .6]  plot[domain=-2 : 1, samples=20, variable = \x] (0,  {\fnf(\x, \t)}, 2.5 - \x);  
\draw[line width = .5pt, gray!80!white, opacity = .6]  plot[domain=1 : {1.5 - sin(180*\t)/2.7}, samples=20, variable = \x] (0, {\fnf(1, \t) + .5 * \t *(\x - 1)}, 2.5 - \x ); 
\draw[line width = .5pt, gray!80!white, opacity = .6]  plot[domain={1.5 - sin(180*\t)/2.7}: 2, samples=20, variable = \x] (0, {\fnf(1, \t) + .5 * \t *(\x - 1)}, 2.5 - \x ); 
\draw[line width = .5pt, gray!80!white, dotted, opacity = .6]  plot[domain=2 : 2.5, samples=20, variable = \x] (0, {\fnf(1, \t) + .5 * \t *(\x - 1)}, 2.5 - \x );  
\draw [fill = gray!80!white] (0, {\fnf(1, \t) + .5 * \t *( (\fns(\t)) - 1)}, {2.5 -  (\fns(\t)) } )  circle [radius=.01cm];
\draw[line width = .5pt, gray!80!white, opacity = .6]  plot[domain=0 : 90, samples=20, variable = \u] ( {0.5 * cos(\u)}, {\fnf(1, \t) + .5 * \t }, {0.5 * sin(\u)} );  
\draw[line width = .5pt, gray!80!white, opacity = .6]  plot[domain=-2 : 1, samples=20, variable = \x] (2.5 - \x,  {\fnf(\x, \t)}, 0);  
\draw[line width = .5pt, gray!80!white, opacity = .6]  plot[domain=1 : {\fns(\t)}, samples=20, variable = \x] (2.5 - \x, {\fnf(1, \t) + .5 * \t *(\x - 1)}, 0 );  
\draw[line width = .5pt, gray!80!white, opacity = .6]  plot[domain={\fns(\t)} : 2, samples=20, variable = \x] (2.5 - \x, {\fnf(1, \t) + .5 * \t *(\x - 1)}, 0 );  
\draw[line width = .5pt, gray!80!white, dotted, opacity = .6]  plot[domain=2 : 2.5, samples=20, variable = \x] (2.5 - \x, {\fnf(1, \t) + .5 * \t *(\x - 1)}, 0 );  
\draw [fill = gray!80!white] ({2.5 -  (\fns(\t)) }, {\fnf(1, \t) + .5 * \t *( (\fns(\t)) - 1)}, 0 )  circle [radius=.01cm];
}
\foreach \t in {0.6}{ 
\draw[line width = .6pt, gray!70!black, opacity = 1]  plot[domain=-2 : 1, samples=20, variable = \x] (0,  {\fnf(\x, \t)}, 2.5 - \x);  
\draw[line width = .6pt, gray!70!black, opacity = 1]  plot[domain=1 : {1.5 - sin(180*\t)/2.7}, samples=20, variable = \x] (0, {\fnf(1, \t) + .5 * \t *(\x - 1)}, 2.5 - \x );  
\draw[line width = .6pt, gray!70!black, opacity = 1]  plot[domain={1.5 - sin(180*\t)/2.5}: 2, samples=20, variable = \x] (0, {\fnf(1, \t) + .5 * \t *(\x - 1)}, 2.5 - \x );  
\draw[line width = .6pt, gray!70!black, dotted, opacity = 1]  plot[domain=2 : 2.5, samples=20, variable = \x] (0, {\fnf(1, \t) + .5 * \t *(\x - 1)}, 2.5 - \x );  
\draw [fill = gray!80!black] (0, {\fnf(1, \t) + .5 * \t *( (\fns(\t)) - 1)}, {2.5 -  (\fns(\t)) } )  circle [radius=.01cm];
\draw[line width = .6pt, gray!70!black, opacity = 1]  plot[domain=0 : 90, samples=20, variable = \u] ( {0.5 * cos(\u)}, {\fnf(1, \t) + .5 * \t }, {0.5 * sin(\u)} );  
\draw[line width = .6pt, gray!70!black, opacity = 1]  plot[domain=-2 : 1, samples=20, variable = \x] (2.5 - \x,  {\fnf(\x, \t)}, 0);  
\draw[line width = .6pt, gray!70!black, opacity = 1]  plot[domain=1 : {\fns(\t)}, samples=20, variable = \x] (2.5 - \x, {\fnf(1, \t) + .5 * \t *(\x - 1)}, 0 );  
\draw[line width = .6pt, gray!70!black, opacity = 1]  plot[domain={\fns(\t)} : 2, samples=20, variable = \x] (2.5 - \x, {\fnf(1, \t) + .5 * \t *(\x - 1)}, 0 );  
\draw[line width = .6pt, gray!70!black, dotted, opacity = 1]  plot[domain=2 : 2.5, samples=20, variable = \x] (2.5 - \x, {\fnf(1, \t) + .5 * \t *(\x - 1)}, 0 ); 
\draw [fill = gray!80!black] ({2.5 -  (\fns(\t)) }, {\fnf(1, \t) + .5 * \t *( (\fns(\t)) - 1)}, 0 )  circle [radius=.01cm]; 
}
\draw[line width = 1pt, gray!80!white, opacity = .8] (0, 0, 0) -- (0, 3, 0); % the central vertical line representing the mirror H
%  
% below we draw various points and label them
%
 \draw [fill = gray] (0,0,4.5) circle [radius=.02cm]; \node [left] at (0, 0, 4.5) {\tiny $\tau$};
 \draw [fill = gray] (4.5,0,0) circle [radius=.02cm]; \node [right] at (4.5, 0, 0) {\tiny $\tau'$};
 \draw [fill = gray] ( 0, {\fnf(1, .6) + .5 * .6 }, .5 ) circle [radius=.02cm]; \node [left] at (.03, {\fnf(1, .6) + .5 * .6 }, .36 ) {\tiny $d_t$};
 \draw [fill = gray] ( 0.5, {\fnf(1, .6) + .5 * .6 }, 0 ) circle [radius=.02cm]; \node [right] at ( 0.43, {\fnf(1, .6) + .5 * .6 + .1}, 0 ) {\tiny $d'_t$};
 \draw [fill = gray] (0, {\fnf(1, .6) + .5 * .6 *( (\fns(.6)) - 1)}, {2.5 -  (\fns(.6)) } ) circle [radius=.02cm]; 
 \node [left] at  (0.07, {\fnf(1, .6) + .5 * .6 *( (\fns(.6)) - 1) + .05}, {2.5 -  (\fns(.6)) } )  {\tiny $c_t$};
 \draw [fill = gray] ({2.5 -  (\fns(.6)) }, {\fnf(1, .6) + .5 * .6 *( (\fns(.6)) - 1)}, 0 ) circle [radius=.02cm]; 
 \node [right] at  ({2.5 -  (\fns(.6)) -.1}, {\fnf(1, .6) + .5 * .6 *( (\fns(.6)) - 1) + .1}, 0 )  {\tiny $c'_t$};
 \draw [fill = gray]  (0, {\fnf(1, .6) + .5 * .6 *(2.5 - 1)},  0) circle [radius=.02cm]; \node [right] at (-.07, {\fnf(1, .6) + .5 * .6 *(2.5 - 1) + .05 + .02},  0) {\tiny $p_t$};
\node [right] at (-.05, .1, 0) {\tiny $H$};
\end{tikzpicture}
}
\caption{The homotopy appearing in proof of Lemma \ref{l-homotopy-invariance}(b).
The paths at a particular time $t$ are emphasized.
The central vertical line is the mirror $H$.
 We write $\tau' = R_H(\tau)$, $d_t' = R_H(d_t)$ etc.
 The lowest path is $\mu_{\alpha}$ and the highest path is $\mu_{\beta}$
 The path going from $\tau$ to $p_t$ is $\alpha_t$ (with the last portion drawn as a dotted line). 
 The path going from $\tau$ to $d_t$ to $d'_t$ to $\tau'$ is 
 $\mu_{\alpha_t}^*$. Its middle portion from $c_t$ to $c'_t$ is identical to a meridian $\mu_{c_t, H}$.
The central circular arc from $d_t$ to $d'_t$ lies on the
boundary of the shaded tube $T$.
}
\label{fig1}
\end{figure}
 \begin{definition}
Definition \ref{def-the-path-mu-alpha} and lemma \ref{l-homotopy-invariance}
yield a well defined map $\alpha \mapsto \mu_{\alpha}$ 
from $\Pi(\tau, H^{\circ})$ to homotopy class of paths in $X - \euH$
starting at $\tau$ and ending at $R_H(\tau)$.
Define $p: \eu{K} \to G$ by 
\begin{equation*}
p(\alpha) = (\mu_{\alpha}, R_{H} ) \text{\; where \;} H = e(\alpha).
\end{equation*}
\end{definition}
\begin{theorem}
\label{th-rack-structure}
Assume $G$ is generated by the meridians based at $\tau$.
The triple $(G, \eu{K}, p)$ is an augmented rack, in fact an augmented quandle. In particular 
$\eu{K}$ is a rack under the operation 
$\alpha \triangleright \beta = p(\alpha) \beta$.
\end{theorem}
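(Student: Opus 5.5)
The plan is to check the three things the statement requires. First, the formula $(\gamma, s)\alpha = \gamma * s(\alpha)$ is a well defined action of $G$ on $\eu{K}$. Second, $p$ satisfies $p(g\alpha) = g\, p(\alpha)\, g^{-1}$. Third, the quandle identity $p(\alpha)\alpha = \alpha$ holds. Well-definedness of $p$ itself is Lemma \ref{l-homotopy-invariance}. I do not expect to use the hypothesis that $G$ is generated by meridians for these three verifications. It seems to matter only for finer properties of $\eu{K}$, such as transitivity or generation, so I will note this and not use it.

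\emph{Step 1: the action.} Let $s \in \Gamma$ and $\alpha \in \op{Path}_t(\tau, H^{\circ})$. Since $s$ is an isometry with $s(\eu{M}) = \eu{M}$, the path $s(\alpha)$ runs from $s\tau$ to a generic point of $sH$. Its final portion $s(\overline{\alpha(t)\alpha(1)})$ is a geodesic orthogonal to $sH$, because $\op{pr}_{sH}\circ s = s \circ \op{pr}_H$. Hence $\gamma * s(\alpha)$, after reparametrization, lies in $\op{Path}_{t'}(\tau, (sH)^{\circ})$ for some $t'<1$. Homotopies of $\gamma$ rel endpoints and homotopies of $\alpha$ in the sense of $\sim$ both give homotopies of the concatenation, so the action is well defined on classes. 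The identity $(g g_1)\alpha = g(g_1 \alpha)$ follows directly from the multiplication rule $(\gamma, s)(\gamma_1, s_1) = (\gamma * s\gamma_1, s s_1)$.

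\emph{Step 2: equivariance.} Let $g = (\gamma, s)$ and $e(\alpha) = H$, and choose $c = \alpha(t)$ as in Definition \ref{def-the-path-mu-alpha}. Then $s(c)$ is an admissible choice for $s(\alpha)$, and the final portion of $g\alpha$ is $s(\alpha)\vert_{[t,1]}$. Naturality of the meridian construction under isometries preserving $\eu{M}$ gives $s(\mu_{c,H}) = \mu_{sc, sH}$, since $s$ carries the complex geodesic, the ball and the rotation $R_H$ to the corresponding objects for $sH$, and $R_{sH} = sR_Hs^{-1}$. Therefore
\begin{equation*}
\mu_{g\alpha} = \gamma * s(\alpha_c) * s(\mu_{c,H}) * sR_H(\alpha_c^{op}) * R_{sH}(\gamma^{op}) = \gamma * s(\mu_\alpha) * R_{sH}(\gamma^{op}).
\end{equation*}
On the other side, expanding $(\gamma,s)(\mu_\alpha,R_H)(s^{-1}\gamma^{op},s^{-1})$ with the multiplication rule gives exactly $(\gamma * s(\mu_\alpha) * sR_Hs^{-1}(\gamma^{op}),\, sR_Hs^{-1})$, which equals $(\mu_{g\alpha}, R_{sH}) = p(g\alpha)$.

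\emph{Step 3: quandle identity.} Let $p_0 = \alpha(1)$, which is fixed by $R_H$. The class $p(\alpha)\alpha$ is represented by
\begin{equation*}
\mu_\alpha * R_H(\alpha) = \alpha_c * \overline{cd} * \op{arc}(d, R_H d) * R_H(\overline{dc}) * R_H(\alpha_c^{op}) * R_H(\alpha_c) * R_H(\overline{c p_0}).
\end{equation*}
The back-and-forth part cancels, leaving $\alpha_c * \overline{cd} * \op{arc}(d,R_Hd) * R_H(\overline{d p_0})$. We must show this is $\sim$-equivalent to $\alpha$. For $\theta \in [0,1]$, let $\beta_\theta$ be the path $\alpha_c * \overline{cd}$, followed by the arc from $d$ through angle $2\pi\theta/n$ in the disc of $\overline{cp_0}^{\C}$, followed by the radial segment from the arc's endpoint to $p_0$. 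Each radial segment lies in the complex line orthogonal to $H$ at $p_0$, so it is a real geodesic orthogonal to $H$ ending at the generic point $p_0$. The arc stays in the punctured ball, which avoids $\eu{H}$. Parametrize all $\beta_\theta$ so that the radial segment occupies $[s,1]$ for a fixed $s$. Then $\theta \mapsto \beta_\theta$ is a homotopy in the sense of $\sim$ from $\beta_0 = \alpha$ (up to reparametrization) to $\beta_1 = \mu_\alpha * R_H(\alpha)$.

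I expect the main obstacle to be this last step, together with the analogous bookkeeping in Step 1. The difficulty is keeping the fixed parameter $s$ required by the definition of $\sim$ while reparametrizing and cancelling $R_H(\alpha_c^{op}) * R_H(\alpha_c)$. The cancellation must be done by a homotopy whose paths all keep a final geodesic portion on the same interval. I would handle this by performing every homotopy of initial segments on $[0,s/2]$ and reserving $[s,1]$ for the radial segment.
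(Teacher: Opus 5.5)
Your proposal is correct and follows the paper's proof. Your Step 2 is exactly the paper's computation $\mu_{g\alpha} = \gamma * s(\mu_\alpha) * R_{sH}(\gamma^{op})$ followed by expanding $g\,p(\alpha)\,g^{-1}$ with the multiplication rule, and, as you observe, the paper also never uses the meridian-generation hypothesis. Your Steps 1 and 3 (well-definedness of the action, and the rotation homotopy $\beta_\theta$ plus cancellations giving $p(\alpha)\alpha=\alpha$) fill in details the paper leaves as ``observe'' and ``easy to verify'', and your bookkeeping works because $\sim$ is transitive.
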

\begin{proof}
Take $g = (\gamma, s) \in G$ where $s \in \Gamma$ and $\gamma$ is a path in 
 $(X - \eu{H})$ from $\tau$ to $s \tau$
 and take a path $\alpha$ representing an element of $\eu{K}$.
We need to check $p( g \alpha) = g p(\alpha) g^{-1}$. Let $H = e(\alpha)$.
The paths involved in the argument are shown in figure \ref{fig2}.
%The main path  $\mu_{g \alpha}$  is the path going from $\tau$ to $R_{H'}(\tau)$.
%
 \begin{figure}
 \centerline{
\begin{tikzpicture}[scale = 1.2]
%
%Draw the path $\gamma$
%
\draw[line width = .5pt, ->] (-1, 0) -- (-.3, -.1) -- (.3, -.1); 
\draw[line width = .5pt]  (.3, -.1) -- (1, 0); 
\node [above] at (.3, 0) {\tiny $\gamma$};
%\draw[line width = 1.2pt, gray!80!white, opacity = .8] (0, -1) -- (0, -.15) (0, 0) -- (0, 1);   
%\node [below] at (0, -1) {\tiny $s^{\bot}$};
%  
% Draw the path $\mu_{\alpha}$
%
\draw[line width = .5pt,  ->]  (-1,0)  to[out=90, in=0]  (-1.8, 0); 
\draw[line width = .5pt, ] (-1.8, 0) -- (-1.9, 0) arc[start angle= 30, end angle=150, radius=.123cm] -- (-2.1, 0) --(-2.2, 0)  to[out=180, in= 270] (-3, 0);
\node [above] at (-1.6, 0) {\tiny $\mu_{\alpha}$};
\draw[line width = 1.2pt, gray!80!white, opacity = .8] (-2, -1)  -- (-2, 1); 
\node [left] at (-2, -1) {\tiny $H$};
%
%Draw the path $s \mu_{\alpha}$ mostly by rotating $mu_{\alpha}$.
%
\begin{scope}[xscale = -1, yscale = -1]
\draw[line width = .5pt, ->]  (-1,0)  to[out=90, in=0]  (-1.8, 0); 
\draw[line width = .5pt] (-1.8, 0) -- (-1.9, 0) arc[start angle= 30, end angle=150, radius=.123cm] -- (-2.1, 0) --(-2.2, 0)  to[out=180, in= 270] (-3, 0);
\node [above] at (-1.6, 0) {\tiny $s \mu_{\alpha}$};
\draw[line width = 1.2pt, gray!80!white, opacity = .8] (-2, -1)  -- (-2, 0) (-2, .15) -- (-2, 1); 
\node [left] at (-2, 1) {\tiny $H'$};
\end{scope}
%
%Draw the path $R_{H'}(\gamma)$ mostly by shifting $\gamma$
%
\begin{scope}[xshift = 4cm]
\draw[line width = .5pt, ->] (-1, 0) -- (-.3, .1) -- (.3, .1); 
\draw[line width = .5pt]  (.3, .1) -- (1, 0); 
\node [above] at (.4, .15) {\tiny $R_{H'}(\gamma^{op})$};
%\draw[line width = 1.2pt, gray!80!white, opacity = .8] (0, -1) -- (0, 1); 
%\node [below] at (0, -1) {\tiny $R_{H'}(s^{\bot})$};
\end{scope}
%
% below we draw various points and label them
%
 \draw [fill = gray] (-1, 0) circle [radius=.02cm]; \node [below] at (-1,0) {\tiny $\tau$};
\draw [fill = gray] ( 1, 0) circle [radius=.02cm]; \node [below] at (.9,0) {\tiny $\tau'$};
 \draw [fill = gray] (-3, 0) circle [radius=.02cm]; \node [below] at (-3.2, -.1) {\tiny $R_H(\tau)$};
 \draw [fill = gray] (3, 0) circle [radius=.02cm]; \node [below] at (3, 0) {\tiny $R_{H'}(\tau') $};
 \draw [fill = gray] (5, 0) circle [radius=.02cm]; \node [below] at (5, 0) {\tiny $R_{H'}(\tau)$}; 
\end{tikzpicture}
}
\caption{A schematic diagram of the paths involved in the proof of theorem \ref{th-rack-structure}.
For simplify the drawing, we picture $s$ and $R_H$ as complex reflections of order $2$.
The path $\gamma$ is indicated by a piecewise straight line 
from $\tau$ to $\tau' = s \tau$.
The path $\mu_{\alpha}$ is indicated by the curved path going from $\tau$ to $R_{H}(\tau)$. 
}
\label{fig2}
\end{figure}
Note that
\begin{equation*}
\mu_{\alpha} = \alpha_c * \mu_{c, H} * R_H(\alpha_c^{op}).
\end{equation*}
Let us write $\alpha' = s \alpha$ and $c' = s c$ and $H' = s H$. So $R_{H'} = s R _H s^{-1}$.
We have
\begin{equation*}
s \mu_{\alpha} 
=  \alpha'_{c'} *  \mu_{c',H'} * s R_H  \alpha_c^{op}  
 =  \alpha'_{c'} *  \mu_{c',H'} * R_{H'} (\alpha'_{c'})^{op}
%&= (s \alpha)_{c} * \mu_{c, M} *  R_M (s \alpha)_c^{op}.
\end{equation*}
where the second equality holds since
\begin{equation*}
s R_H \alpha_c^{op} = s R_H s^{-1} s \alpha_c^{op} = R_{H'} (\alpha'_{c'})^{op}.
\end{equation*}
Note that $g \alpha = (\gamma, s) \alpha = \gamma * s \alpha = \gamma * \alpha'$ and 
$e( g \alpha) = H'$. To write a path representing $\mu_{g \alpha}$ 
 (in figure \ref{fig2} this is the path going from $\tau$ to $R_{H'}(\tau)$)
we need to choose
a point on $\alpha'$ close to $H'$. We can choose this point to be $c'$. 
Then 
\begin{align*}
\mu_{g \alpha} 
&= (\gamma * \alpha')_{c'} *\mu_{c', H'} * R_{H'} (\gamma * \alpha')^{op}_{c'}  \\
&= \gamma * \alpha'_{c'} * \mu_{c', H'} * R_{H'} (  \alpha'_{c'})^{op} * R_{H'} \gamma^{op} \\
&= \gamma * s \mu_{\alpha} * R_{H'} \gamma^{op}.
\end{align*}
Finally we compute:
\begin{align*}
g p(\alpha)g^{-1} 
%&= (\gamma, s) (\mu_{\alpha}, R_H) (s^{-1} \gamma^{op}, s^{-1}) \\
%&= (\gamma, s) (\mu_{\alpha} * R_H s^{-1} \gamma^{op}, R_H s^{-1} ) \\
&=  (\gamma * s \mu_{\alpha} * s R_H s^{-1} \gamma^{op}, s R_H s^{-1} ) \\
&=  (\gamma * s \mu_{\alpha} * R_{H'} \gamma^{op}, R_{H'} ) \\
&=  (\mu_{g \alpha}, R_{H'} ) \\
& = p(g \alpha).
\qedhere
\end{align*}
It is easy to verify that $p(\alpha) \alpha= \alpha$.
\end{proof}
\begin{remark}
We may relax the condition (3) in \ref{def-paths-orthogonal-to-generic-points-of-mirrors} and just say that
the path $\alpha$ is orthogonal to the mirror $H$ at $\alpha(1)$. Since $\alpha(1)$ is a generic point of $H$, 
this defines exactly the same homotopy classes of paths, so the later definitions do not change.
Choosing $\alpha$ to follow the geodesic near $ t = 1$ is a convenience because it means that 
if we choose a point $c$ close to $\alpha(1)$, then from $c$ onwards $\alpha$ and the meridian
$\mu_{c, H}$ follows the same path until $\mu_{c, H}$ starts to go around $H$; this makes the
definition of the path $\mu_{\alpha}$ in \ref{def-the-path-mu-alpha} somewhat simpler.
\end{remark}
\begin{example}
We end by discussing three interesting examples in which our construction applies. The first example is our main reason for
developing the definitions here. This example is related to the monstrous
proposal conjecture  \cite{A}.
\par
(1) Let $M$ be the monster simple group.
The monstrous proposal conjecture states that the group $(M \times M) \rtimes \Z/2$,
is an explicit quotient of an orbifold fundamental group of a complex thirteen dimensional ball quotient.
Below we describe the main characters involved in this conjecture. 
For more details,  we refer the reader to the short survey
 \cite{A} or the article \cite{AB3}.
  An Eisenstein lattice means a free $\Z[e^{2 \pi i/3}]$-module
with a $\Z[e^{2 \pi i/3}]$-valued nonsingular hermitian form.
We start with a special Eisenstein lattice $L$ of signature $(13, 1)$; 
one explicit description of $L$ is ``the complex Leech lattice plus a hyperbolic cell". 
The automorphism group of $L$, denoted by $\Gamma$, is a finite covolume discrete subgroup
of $U(13, 1)$.
The group $\Gamma$ contains order $3$ complex reflections in all the (infinitely many) shortest positive norm
vectors of $L$ and is generated by these complex reflections.
The complex hyperbolic reflection group $P\Gamma \subseteq PU(13, 1)$  naturally acts 
on the complex thirteen ball $X = B^{13}_{\C}$.
% preserving is unique $U(13, 1)$ invariant
%negative curvature metric (also known as the complex hyperbolic space).
Let $\eu{H}$ be the union of the mirrors of $\Gamma$ acting on $X$.
\par
Think of the incidence graph $D$ of the finite projective plane $P^2 \mathbb{F}_3$
as a Coxeter diagram.
Ivanov-Norton-Conway etal \cite{CNS, I, N, CS} proved that  
$(M \times M) \rtimes \Z/2$ is the quotient of the Coxeter group of $D$ 
by some natural looking relations (called deflation relations).
%The same graph $D$ appears in a complex hyperbolic reflection group as follows.
Now it turns out that
there exists a point $\tau \in X$ whose stabilizer  in $P\Gamma$ is 
the automorphism group of the graph $D$, such that $\tau$ has
 exactly $26$ mirrors $H_1, \dotsb, H_{26}$ closest to it, and such that
order $3$ complex reflections $R_1, \dotsb, R_{26}$ in these mirrors generate $\Gamma$ (see \cite{B}).
These $26$ complex reflections can be indexed by the vertices of $D$ 
such that they satisfy the braiding and commuting relations of $D$.
In \cite{AB1, AB2, AB3} it is proved that the orbifold fundamental group 
$G = \pi_1^{\op{orb}}( \Gamma \backslash (X - \eu{H}), \tau)$ is generated by
the $26$ meridians $g_1 = (\mu_{\tau, H_1}, R_1), \dotsb, g_{26} = (\mu_{\tau, H_{26}}, R_{26})$
and these meridians satisfy the braiding and commuting relations of $D$ and 
also satisfy the deflation relation. The monstrous proposal conjecture states that
the quotient $Q$ of  the orbifold fundamental group $G$ by the relations $g_1^2 = \dotsb = g_{26}^2 = 1$ 
is $(M \times M) \rtimes \Z/2$. In \cite{AB3} it is proved that
this quotient $Q$ is either $(M \times M) \rtimes \Z/2$ or $\Z/2$. Ruling out the second case would
prove the monstrous proposal conjecture.
\par
If true, the monstrous proposal conjecture leads to construction of a complex $12$-manifold 
on which the monster acts by deck transformation (see \cite{A}). 
Further evidence for existence of such a manifold is provided by \cite{L}.
This manifold seems to be a great potential candidate for the ``monster manifold"
sought by Hirzebruch \cite{HBJ}. 
These connections motivated us to define the rack $\eu{K}$ to build representations of $G$ 
(for example on the rack cohomology of $\eu{K}$).
%Computations in elliptic genera suggest that
%the action of the monster on the appropriate cohomology spaces of
%the ``correct monster manifold" should give many of the moonshine representations
%of the monster.
\par
(2) There is a subgroup $\Gamma_{DM} \subseteq \Gamma$
acting on a complex $9$-ball $X_{DM}$ with its own union of mirrors $\eu{H}_{DM}$ such that
$(\Gamma_{DM} \backslash( X_{DM} - \eu{H}_{DM} )$ is the space of $12$-distinct points on $\mathbb{P}^1$.
The complex reflection group $\Gamma_{DM}$ is the largest example found by Deligne-Mostow 
among the examples of lattices in $PU(n,1)$ constructed via monodromy of Lauricella hypergeometric functions
(see \cite{DM, M}).
In fact the lattice $\Gamma_{DM}$ and the moduli interpretation of its ball quotient is
the key ingredient in the proof of the deflation relation in \cite{AB3}.
\par
(3) The moduli space of smooth cubic surfaces in $\mathbb{P}^3$ can be realized as a ball quotient
$\Gamma_4 \backslash (\mathbb{B}^4_{\C} - \eu{H}_4)$ of the sort described above (see \cite{ACT}).
Here $\Gamma_4$ is the automorphism group of the unique
self-dual Eisenstein lattice of signature $(4, 1)$ and contains order $6$ complex reflections in its roots.
Much of the complex hyperbolic geometry associated to this example is similar to the ones described
in the first example. This seems to lead to a nice description of the 
fundamental group of the space of smooth cubic surfaces. 
%This is work in progress jointly with  Allcock and Looijenga.
\end{example}

%
%***************************************************************************************************************************
%

\end{document}